\newcommand{\cG}{\mathcal{G}}
\newcommand{\cC}{\mathcal{C}}
\newcommand{\cF}{\mathcal{F}}
\newcommand{\KK}{\mathbb{K}}
\newcommand{\id}{\text{id}}
\newtheorem{theorem}{Theorem}[section]
\newtheorem{proposition}[theorem]{Proposition}
\theoremstyle{definition}
\theoremstyle{remark}
\newenvironment{example}[1][Example \arabic{section}.\arabic{theorem}.]
{\begin{trivlist}\refstepcounter{theorem}
\item[\hskip \labelsep {\bfseries #1}]}{\hfill $\diamondsuit$\end{trivlist}}
\newcommand{\AAA}{\mbox{$\Bbb A$}}
\newcommand{\RR}{\mbox{$\Bbb R$}}
\title{Dynamic Programming in Topological Spaces}
\author{Merve Nur Cakir, Mehwish Saleemi, Karl-Heinz Zimmermann\footnote{Corresponding autor, k.zimmermann(at)tuhh.de}\\ Hamburg University of Technology\\ 20171 Hamburg, Germany}
\begin{document}
\maketitle

\begin{abstract}
Dynamic programming is a mathematical optimization method and a computer programming method as well.
In this paper, the notion of sheaf programming in topological spaces is introduced and it is demonstrated that it
relates very well to the concept of dynamic programming.
\end{abstract}
\medskip

{\bf AMS Subject Classification} 90C39, 54B40, 54D70
\medskip

{\bf Keywords} Dynamic programming, sheaves, sheaf programming, Noetherian topological space

\section{Introduction}
%This paper introduces a new programming paradigm based on the theory of sheaves.
%As the examples show, it is closely related to dynamic programming which is a method for solving complex optimization problems~\cite{bellman}.
%paper DynProgMIT, page 8 
%es gibt auch andere Algorithmen, hier nur forward dann backward
%The essential feature of the dynamic programming approach is the structuring of the problem into multiple stages which are tackled sequentially one stage at a time.
%A forward induction process evaluates the stages where the first stage evaluated is the initial stage of the problem.
%Then the stages are evaluated by moving forward one at a time and thereby the evaluation of one stage helps to evaluate the next stage in the sequence.
%The backward induction process evaluates the optimal policy at each state starting with the final stage and moving backwards 
%one stage at a time until the initial stage is reached.
%
%Each stage is associated with states of the process.
%The states provide the information to fully assess the consequences that the current decision has upon future actions.
%The specification of states is perhaps the most critical design parameter of the dynamic programming model. 
%%There is no rule set for doing this.
%The states should convey enough information to make future decisions without regard how the process reached the current state and
%the number of state variables should be small to guarantee that the computational effort does not become prohibitively expensive.

Dynamic programming is a mathematical optimization method and a computer programming method as well.
The method was introduced by Richard Bellman~\cite{bellman} in the 1950s and has applications in several fields.
In each case, problem is broken down into simpler sub-problems in a recursive way.
The two major properties of dynamic programming are overlapping sub-problems and optimal substructure.
Sub-problems that need to be solved in a recursion again and again are solved only once and for all and stored for future use.
Optimal substructure refers to solving the sub-problems optimally.

This paper introduces the notion of sheaf computation which amounts to dynamic programming based on sheaves.
%As the examples show, it is closely related to dynamic programming which is a method for solving complex optimization problems~\cite{bellman}.
The paper is organized as follows.
Section~2 contains the background on presheaves and sheaves.
Section~3 introduces the notion of sheaf computation.
Section~4 provides some known (and unusual) examples.

\section{Sheaves}
In mathematics, sheafs provide a tool for tracking systematically locally defined data associated to the open sets of a topological space~\cite{ga,tenn}.
This kind of data can be restricted to smaller open sets, and the data assigned to an open set corresponds to all collections of compatible data assigned to collections of
smaller open sets covering the given one.
Sheaves are quite abstract objects and their definition is rather subtle.
They come as sheaves of rings or sheaves of modules depending on the type of assigned data.

Let $X$ be a topological space and let $R$ be a commutative ring with unity.
A {\em presheaf\/} $\cF$ of rings on $X$ consists of the following data:
\begin{itemize}
\item For each open subset $U$ of $X$, there is a ring $\cF(U)$ given as the ring of $R$-valued functions on $U$.
\item For each inclusion of open sets $V\subseteq U$ of $X$, there is a ring homomorphism $\rho_{V,U}:\cF(U)\rightarrow\cF(V)$.
\end{itemize}
The elements of $\cF(U)$ are called {\em sections\/} of $\cF$ over $U$.
A section over $X$ is called a {\em global section}. 
The morphisms $\rho_{V,U}$ are called {\em restriction maps}.
We write $\sigma_{|V}$ instead of $\rho_{V,U}(\sigma)$ and think of it as restricting the mapping $\sigma\in\cF(U)$ to the subset $V$.
The restriction maps fulfill the following properties:
\begin{itemize}
\item $\cF(\emptyset) = 0$.
\item For each open set $U$ of $X$, the restriction map $\rho_{U,U} =\id_U$ is the identity.
\item For each inclusion of open sets $W\subseteq V\subseteq U$, we have $\rho_{W,U} = \rho_{W,V}\circ\rho_{V,U}$ 
\end{itemize}
A presheaf can be viewed as a contravariant functor from the category $\cC(X)$, whose objects are the open sets in $X$ and whose morphisms are the inclusion mappings, 
%i.e., $\hom (V,U)=\emptyset$ if $U\not\subset V$ and $\hom(V,U)=\{\rho_{V,U}\}$ if $U\subset V$, 
to the category of rings.
%
%\begin{example}
%The simplest presheaf on a topological space $X$ is the presheaf of constant functions with values in a set $\KK$.
%For any open set $U\subset X$, let $\cF(U)$ be the ring of constant functions on $U$, and for open sets $V\subset U$ let $\rho_{V,U}$ be the corresponding 
%restriction function.
%Then $\cF$ is a presheaf on $X$, but not a sheaf, since being constant is not a local property. 
%To see this, let $U =U_1\cup U_2$ be the union of two disjoint open sets. 
%Let $f_1:U_1\rightarrow\KK$ be the constant function~1 and let $f_2:U_2\rightarrow\KK$ be the constant function~2.
%Then $f_1$ and $f_2$ agree on the overlap $U_1\cap U_2=\emptyset$ but there is no constant function on $U$ that restricts to both $f_1$ and $f_2$ on $U_1$ and $U_2$,
%respectively.
%%However, the locally constant functions with values in a set $\KK$ have the glueing property and so form a sheaf on $X$.
%\hfill $\diamond$
%\end{example}

A presheaf $\cF$ of rings on $X$ is a {\em sheaf\/} if it satisfies the following two conditions:
\begin{itemize}
\item (Uniqueness) For any open subset $U$ of $X$, any open covering $U = \bigcup_{i\in I} U_i$, and any sections $\sigma,\tau\in \cF(U)$,
if $\sigma_{|U_i}=\tau_{|U_i}$ for all $i\in I$, then $\sigma=\tau$ on $U$.
\item (Gluing) For any open subset $U$ of $X$, any open covering $U = \bigcup_{i\in I} U_i$, 
and any family of sections $\sigma_i\in \cF(U_i)$ for $i\in I$ such that
$\sigma_{i|U_i\cap U_j}= \sigma_{j|U_j\cap U_i}$ for all index pairs $(i,j)$, there exists a section $\sigma\in \cF(U)$ such that
$\sigma_{|U_i}= \sigma_i$ for all $i\in I$.
\end{itemize}
These conditions state that sections which are compatible in the sense of the gluing property can be uniquely glued together.
%%%The first property provides the uniqueness sections given by an open covering, 
%%the second one is  glueing property which says that if sections agree on the overlaps in an open covering, they can be glued together in a unique way.
%%\begin{example}
%%Let $X$ be a projective variety in $\PP^n$.
%%%Define $\cO_X(U)$ as the ring of rational functions defined and regular on $U$.
%%%Then $\cO_X$ is a sheaf on $X$ called the {\em structure sheaf\/} on $X$.
%%\end{example}
%In fact, being regular is a local property: A function is regular at a point $P$ if there is a Zariski open set $U$ around $P$ where the function can be written
%as a quotient of polynomials whose denominator is nonzero at $U$.

%Let $R$ be a commutative ring with unity.
%Let $S$ be a finite set and $T_s:X\rightarrow X$ be a mapping for each $s\in S$.
%Define the expression
%$$\sigma(x) = \sup_{s\in S} [g_s(x) + h_s(x)\cdot \sigma(T_s(x))].$$
%$$\cF(U) = \{\sigma:U\rightarrow R\mid \sigma\mbox{ given in }\}.$$

Sheaves are defined on open sets but the underlying topological space $X$ consists of points.
Fix a point $x\in X$ and take pairs $(U,\sigma)$ where $U$ is an open subset of $X$ with $x\in U$ and $\sigma$ is a section over $U$.
Two such pairs $(U,\sigma)$ and $(V,\tau)$ are equivalent if there is an open subset $W$ with $x\in W\subseteq U\cap V$ 
and $\sigma_{|W}=\tau_{|W}$; this defines an equivalence relation.
The set of all such pairs modulo this equivalence is the stalk $\cF_x$ of $\cF$ at $x$, 
which inherits a ring structure from the rings $\cF(U)$.
The elements (equivalence classes) of $\cF_x$ are the germs of $\cF$ at $x$.

Let $\cF$ and $\cG$ be a sheaves on $X$.
A {\em morphism\/} $\phi:\cF\rightarrow\cG$ of sheaves is a collection of homomorphisms $\phi(U):\cF(U)\rightarrow\cG(U)$, where $U\subset X$ is open, such that 
for each inclusion $U\subset V$ of open sets, the following diagram commutes:
$$
\xymatrix{
\cF(V)\ar[dd]^{\rho_{V,U}}\ar[rr]^{\phi(V)}  & & \cG(V)\ar[dd]^{\rho_{V,U}}  \\
                                             & &                  \\
\cF(U)\ar[rr]^{\phi(U)}                      & & \cG(U)  \\
}
$$
%Thus a morphism of sheaves is a natural transformation of the corresponding functor from the category of open sets on $X$ to the category of sets.

%\begin{example}
%Let $f:X\rightarrow Y$ be a morphism of varieties. 
%That is, for each open set $U\subset Y$ and each section $s\in \cO_Y(U)$, the function $f\circ s$ is regular on $f^{-1}(U)$.
%Thus $f$ induces a morphism $\cO_Y\rightarrow f_*(\cO_X)$ of sheaves defined by $U\mapsto\cO_X(f^{-1}(U))$ and called the {\em direct image sheaf}.
%\hfill $\diamond$
%\end{example}

\section{Sheaf Computations}

We describe the notion of sheaf computation in topological spaces.
%A Noetherian topological space $X$ is a topological space in which closed subsets fulfill the descending chain condition.
%Equivalently, the open subsets of $X$ satisfy the ascending chain condition, since each open subset is the complement of a closed subset. 
%Equivalently, each open subset $U$ of $X$ is compact, i.e., each open cover of $U$ has a finite open subcover.

%The topological space is assumed to be {\em Noetherian}, i.e., the open subsets satisfy the ascending chain condition.
%Note that every Noetherian topological space $X$ is {\em compact}, i.e., every open cover of $X$ has a finite subcover.
%Moreover, if $X$ is compact, each open subset of $X$ is also compact.
For this, let $X$ be a topological space, $\cF$ be a sheaf of rings on $X$, and $R$ be a commutative ring with unity.

For each open subset $U$ of $X$, let $\cF(U)$ denote the ring of $R$-valued functions on $U$.
The following procedure defines inductively sections over $X$: 
\begin{enumerate}
\item Base step:
Put $U=\emptyset$ and take the zero function $\sigma\in\cF(U)$. % which amounts to the zero element $\sigma=0$ of $R$.
\item Inductive step:
Consider the collection $(U_i)_{i\in I}$ of open subsets of $X$ for which sections $\sigma_i$, $i\in I$,
have already been defined.
Take the open covering $U'=\bigcup_{i\in I}U_i$ and pick a minimal open subset $U$ of $X$ which contains $U'$
and for which a section over $U$ has not yet been defined.
By glueing and uniqueness, there is a unique section $\sigma'$ over $U'$ such that 
$\sigma'_{|U_i}=\sigma_i$ for all $i\in I$. 
Now extend the section $\sigma'$ to a section $\sigma$ over $U$ such that $\sigma_{|U'}=\sigma'$.
%Take an open subset $U$ of $X$ with the property that for each open subset $V$ of $U$, which is maximal (by inclusion),
%a section $\sigma_V$ over $V$ has already been defined.
%Consider the collection $(U_i)_{i\in I}$ of open subsets of $X$ for which sections $\sigma_i$, $i\in I$, have already been defined. 
%Take the open covering $U' = \bigcup_{i\in I} U_i$.
%Let $U'$ be the open subset of $X$ for which an open section $\sigma'$ has already been defined.
%Pick a minimal open subset $U$ of $X$ which contains $U'$ and for which a section over $U$ has not yet been defined.
%%By compactness, there exists a finite open subcovering $U' = \bigcup_{i\in J} U_i$ with $J\subseteq I$ finite.
%%By gluing and uniqueness, there is a unique section $\sigma'$ over $U'$ such that $\sigma'_{|U_i} = \sigma_i$ for all $i\in I$.
%Then extend the section $\sigma'$ over $U'$ to a section $\sigma$ over $U$ such that $\sigma_{|U'} = \sigma'$
\end{enumerate}
When a global section $\sigma$ over $X$ is inductively reached, 
we speak of a {\em sheaf computation\/} over $X$ and $R$ 
and the global section $\sigma$ is called the {\em result\/} of the computation.

\begin{proposition}
Sheaf computations are well-defined.
\end{proposition}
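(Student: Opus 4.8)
The plan is to read ``well-defined'' as the assertion that the inductive procedure is legitimate at every stage and that its outcome does not depend on the choices made along the way: the appeal to gluing and uniqueness in the inductive step must be justified, so that a \emph{unique} section $\sigma'$ over $U'$ genuinely exists, and the construction must consistently enlarge the collection of sections already defined until a global section over $X$ is reached. Accordingly, I would argue by induction on the stages of the computation, carrying along the loop invariant that \emph{all sections defined so far are pairwise compatible on overlaps}, i.e.\ for any two already-defined sections $\sigma$ over $U$ and $\tau$ over $V$ one has $\sigma_{|U\cap V}=\tau_{|U\cap V}$. For the base step there is only the zero section over $\emptyset$, and since $\cF(\emptyset)=0$ the invariant holds vacuously.

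For the inductive step, suppose the invariant holds for the sections $\sigma_i$ over $U_i$, $i\in I$, selected at the current stage. In particular $\sigma_{i|U_i\cap U_j}=\sigma_{j|U_j\cap U_i}$ for every pair $(i,j)$, which is exactly the compatibility hypothesis of the gluing axiom. Gluing then yields a section $\sigma'$ over $U'=\bigcup_{i\in I}U_i$ with $\sigma'_{|U_i}=\sigma_i$, and the uniqueness axiom shows $\sigma'$ is the only such section; this is precisely the step the definition invokes. I would then check that $\sigma'$ is compatible not merely with the chosen $\sigma_i$ but with every earlier section $\tau$ over $V$: the sets $U_i\cap V$ cover $U'\cap V$, on each of them $\sigma'_{|U_i\cap V}=\sigma_{i|U_i\cap V}=\tau_{|U_i\cap V}$ by the transitivity $\rho_{W,U}=\rho_{W,V}\circ\rho_{V,U}$ of the restriction maps together with the invariant, and so uniqueness forces $\sigma'_{|U'\cap V}=\tau_{|U'\cap V}$.

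The remaining work, and the point I expect to be the main obstacle, is the extension step: the definition only requires the new section $\sigma$ over the minimal enclosing open set $U\supseteq U'$ to satisfy $\sigma_{|U'}=\sigma'$, which need not pin $\sigma$ down on $U\setminus U'$ and hence threatens both the preservation of the invariant and the independence of the final result from the choices. To close this gap I would exploit the minimality of $U$ among open sets containing $U'$ for which no section has yet been defined, together with the structural hypotheses on $X$ implicit in the setting (a Noetherian topological space), in order to control $U\setminus U'$ and to show that an extension compatible with all previously defined sections can always be selected, so that the invariant survives the stage. Once this is established, the induction shows that every stage is legitimate and that the defined sections grow coherently; applying uniqueness one final time over a cover of $X$ then identifies the global section independently of the order in which sets were processed, which is the content of the proposition.
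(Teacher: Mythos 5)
Your first two paragraphs (the compatibility invariant, the justification of the gluing step, and the check that the glued section $\sigma'$ remains compatible with earlier sections) are sound and are implicitly what the paper relies on, but they are not where the difficulty lies. The whole content of the proposition sits in your third paragraph, and there you stop exactly where the proof has to begin: you correctly observe that the extension of $\sigma'$ to a minimal open set $U\supseteq U'$ is unconstrained on $U\setminus U'$ and that this threatens compatibility, but you then only announce that you ``would exploit the minimality of $U$ \dots\ to show that an extension compatible with all previously defined sections can always be selected.'' That is a statement of intent, not an argument, and it also points in a slightly wrong direction: the resolution is not that a compatible extension can be \emph{selected}, but that \emph{every} extension is automatically compatible.

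The missing step is the following observation, which is the entirety of the paper's proof. Let $U$ and $V$ be two distinct minimal open sets containing $U'$. Then $U\cap V$ is an open set with $U'\subseteq U\cap V\subseteq U,V$; if the first inclusion were proper, minimality of $U$ would force $U\cap V=U$, hence $U\subseteq V$, and minimality of $V$ would then force $U=V$, a contradiction. Therefore $U\cap V=U'$, and since any two extensions $\sigma$ over $U$ and $\tau$ over $V$ both restrict to $\sigma'$ on $U'$, they agree on the overlap regardless of how they were chosen on $U\setminus U'$ and $V\setminus U'$. This is what preserves your invariant through the extension stage. Note also that the Noetherian hypothesis you appeal to plays no role here; in the paper it is only used for the \emph{termination} of the procedure (the subsequent proposition), not for its consistency. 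Finally, be aware that the result of the computation genuinely does depend on the choices of extensions (the paper later introduces a total ordering to make the procedure deterministic), so ``well-defined'' should be read as ``every gluing step the procedure performs is legitimate,'' not as ``the global section is independent of all choices,'' which your closing sentence suggests and which is false in general.
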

\begin{proof}
Suppose $U'$ is the open subset of $X$ for which a section $\sigma'$ has already been defined.
Let $U'\subset U$ and $U'\subset V$, where $U$ and $V$ are minimal open sets containing $U'$.
To show that sheaf computations are well-defined, it would be enough to prove that the extended
sections agree on the intersection.
To see this, take the open covering $U\cap V= U'\cup U''$ with $U''\subset U,V$.
Then $U'\subset U'\cup U''\subset U,V$, which contradicts the minimality of $U,V$.
Hence, $U\cap V=U'$ on the sections coincide.
\end{proof}
Intuitively, function-like objects form a presheaf; 
they give rise to a sheaf if the functions exhibit a local behaviour~\cite{ga}.

%Confluence is a property of an abstract rewriting system which says that each term of the system has at most one normal form.

Sheaf computations are deterministic when a total ordering on the minimal open subsets $U$ of $X$ containing $U'$ is given.

Sheaf computations are finite if the underlying topological space $X$ is Noetherian.
A Noetherian topological space is a topological space $X$ in which the closed subsets fulfill the descending chain condition.
Equivalently, the open subsets of $X$ satisfy the ascending chain condition, since each open subset is the complement of a closed subset. 
Equivalently, each open subset $U$ of $X$ is compact, i.e., each open cover of $U$ has a finite open subcover.
For instance, each affine variety is a Noetherian topological space~\cite{ga}.

%The sheaf computations should be well-defined, deterministic, and finite.
%\begin{itemize}
%\item
%In view of well-definedness,
%suppose $\sigma'$ is well-defined as a section over $U'$.
%In order to define the extended section $\sigma$ over $U$, for each open subset $V$ for which a section $\tau$ has already been defined,
%we must have $\sigma_{|U\cap V} = \tau_{|U\cap V}$.
%\item
%%FINITE SPACE
%In view of deterministism, pick in each step an (inclusion) maximal open set $U$ for which a section $\sigma$ over $U$ has not yet been defined.
%In order to break ties, choose a total (lexicographic) ordering $<$ on the set $X$ and view each finite subset $U=\{u_1<\ldots<u_n\}$ 
%as a word $u_1\ldots u_n$ over the alphabet $X$.
%Then among the maximal open sets, pick the set whose associated word is smallest with respect to the total ordering on the words over $X$.
%%In view of deterministism, one way is to totally order the open subsets $U$ if possible for which a section $\sigma$ over $U$ has not yet been defined.
%\item
%In view of finiteness, the topological space $X$ should be Noetherian.
%A Noetherian topological space is a topological space $X$ in which the closed subsets fulfill the descending chain condition.
%Equivalently, the open subsets of $X$ satisfy the ascending chain condition, since each open subset is the complement of a closed subset. 
%Equivalently, each open subset $U$ of $X$ is compact, i.e., each open cover of $U$ has a finite open subcover.
%\end{itemize}

\begin{proposition}
If $X$ is a Noetherian topological space, the procedure of sheaf computation provides a global section over $X$.
\end{proposition}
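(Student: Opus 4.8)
The plan is to exploit the characterization of a Noetherian space recorded just above the statement: the open subsets of $X$ satisfy the ascending chain condition. First I would track the open set over which a section has been defined after each application of the inductive step. Writing $W_n$ for the union of all open sets carrying a section after the $n$-th step, the base step gives $W_0=\emptyset$, and I claim that each step enlarges this region strictly. Indeed, in the inductive step one forms $U'=\bigcup_{i\in I}U_i$ and obtains a section $\sigma'$ over $U'$ by gluing and uniqueness, so that a section over $U'$ is now available; the newly chosen $U$ is required to be an open set containing $U'$ over which a section has \emph{not} yet been defined, which forces $U\supsetneq U'$. Since $U\supseteq U'=W_n$, the region after the step is exactly $W_{n+1}=U$, and therefore $W_0\subsetneq W_1\subsetneq W_2\subsetneq\cdots$ is a strictly ascending chain of open subsets of $X$.

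Next I would invoke the Noetherian hypothesis. By the ascending chain condition on open sets, a strictly ascending chain $(W_n)$ cannot be infinite, so it must stabilise after finitely many steps at some open set $W:=W_N$. This is precisely the finiteness of the sheaf computation asserted in the text, and it guarantees that the inductive procedure halts after a finite number of applications.

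It then remains to identify the terminal region $W$ with the whole space $X$, and this is the step I expect to be the main obstacle. The guiding idea is that halting can occur only when no admissible extension is left: if $W\subsetneq X$ held, then $X$ itself would be an open set properly containing $W=U'$ over which no section has yet been defined, so the inductive step would still be applicable and the chain could be prolonged, contradicting stabilisation at step $N$. The delicate point is the existence of a \emph{minimal} open set strictly containing $W$ that the inductive step calls for: in a general Noetherian space such minimal open sets need not exist, because the Noetherian property supplies minimal closed (equivalently maximal open) elements but not maximal closed ones, so the complement-of-$U'$ argument used for well-definedness does not by itself produce the required candidate. I would therefore argue directly that stabilisation of the chain means the selection in the inductive step returns no admissible $U$, and that—since $X$ is always an admissible open set whenever $W\subsetneq X$—this can happen only when $W=X$. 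Once $W=X$ is established, the section carried by $W=X$ is by definition a global section over $X$, and it is exactly the result of the computation, which completes the argument.
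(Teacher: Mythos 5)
The paper states this proposition without supplying a proof, so there is nothing to compare your argument against; I can only assess it on its own terms. The first two stages of your argument are sound and are surely the intended skeleton: the region $W_n$ carrying a section grows strictly with each successful application of the inductive step, and the ascending chain condition on open sets (the paper's own reformulation of Noetherianity) forces the chain $W_0\subsetneq W_1\subsetneq\cdots$ to terminate after finitely many steps.

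The genuine gap is in your final stage, and it is exactly the one you flag but do not close. Stabilisation of the chain does \emph{not} mean that ``the selection returns no admissible $U$'' in the sense you then exploit: the inductive step demands a \emph{minimal} open set $U$ properly containing $U'=W_N$ without a section, and the nonexistence of a minimal such $U$ is compatible with the existence of plenty of admissible (non-minimal) ones, including $X$ itself. So your dichotomy ``either $W=X$ or the step still applies'' is false as stated. This is not a removable technicality: take $X=\AAA^1$ over an infinite field $\KK$ with the Zariski topology, a Noetherian space. After the base step $U'=\emptyset$, the open sets properly containing $U'$ are the cofinite subsets, and removing a point from any of them yields a strictly smaller nonempty open set, so no minimal admissible $U$ exists and the procedure stalls at $\emptyset$ without ever producing a global section. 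Hence the proposition cannot be proved as literally stated; one must either add a hypothesis guaranteeing that every proper open subset admits a minimal proper open extension (true in the paper's finite examples, where the opens satisfy the descending chain condition as well), or relax the minimality requirement in the inductive step (e.g., permit the choice $U=X$ when no minimal extension exists). Your write-up correctly senses the obstruction but asserts rather than establishes the conclusion, so the argument as given does not go through.
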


A topology on $X$ indudes a {\em subspace topology\/} on any subset $Y$ of $X$. 
For this, the open subsets of $Y$ are of the form $Y\cap U$ where $U$ is an open subset of $X$.
Note that if $Y$ is open, the open subsets of $Y$ in the subspace topology are exactly the open subsets of $X$ contained in $Y$.
From this perspective, a sheaf computation of a global section $\sigma$ over $X$ follows the paradigm of dynamic programming.

\section{Examples}
In the following examples, the topological spaces are built up from bases.
The base $B$ for a topological space $X$ is a collection of open sets in $X$ such that each open set in $X$ can be written as a union of elements of $B$.
Bases have two important properties:
(1) The base elements cover $X$.
(2) Let $U_1$ and $U_2$ be base elements and let $U$ be their intersection.
Then for each element $x\in U$, there is a base element $U_3$ with $U_3\subseteq U$ and $x\in U_3$.
In particular, if the base $B$ is closed under intersection, then $U$ is also a base element.

\begin{example}
Let $n\geq 0$ be an integer. 
%The set of natural numbers $X=\NN_0$ forms a topological space with the base given by the sets $U_n=\{0,\ldots,n\}$ for each $n\in\NN_0$.
The set $X=[n] = \{i\mid 0\leq i \leq n\}$ forms a topological space in which (except $U_{-1}=\emptyset$ and $X$) 
the subsets $U_i=\{0,\ldots,i\}$ for $0\leq i\leq n$ are open.
%Then the space $X$ is Noetherian, but has chains $Y_0\subset Y_1\subset Y_2\subset\ldots$ of irreducible closed subsets of arbitrary length and so has infinite dimension.
%Put $U_{-1}=\emptyset$.
For each integer $i\geq 0$, the open subset $U_i$ of $X$ has $U_{i-1}$ as unique maximal open subset.
Thus an already defined section $\sigma_{i-1}$ over $U_{i-1}$ can only be extended to a section $\sigma_{i}$ over $U_{i}$, 
where $\sigma_i(u) = \sigma_{i-1}(u)$ for each $u\in U_{i-1}$.
%For instance, the Fibonacci sequence $(f_i)$ is computed by $f_0=0$, $f_1=1$, and $f_i = f_{i-1}+f_{i-2}$ for all $i\geq 2$.
\end{example}

\begin{example}
%Let $n\geq 0$ be an integer and let $X=[n]\times[n] = \{(i,j)\mid 0\leq i,j\leq n\}$.
Let $m,n\geq 0$ be integers.
Consider the cartesian product set $X=[m]\times [n] = \{(i,j)\mid 0\leq i,j\leq n\}$.
For each pair $(i,j)\in[m]\times[n]$, define the rectangular set
$$U_{i,j} = \{(k,l)\in X \mid 0\leq k\leq i,0\leq l\leq j\}.$$
Take the sets $U_{i,j}$ as basis of the topological space $X$.
Put $U_{i,-1}=\emptyset$ and  $U_{-1,j}=\emptyset$ for all $i\in[m]$ and $j\in[n]$.
Each open subset $U$ of $X$ has the shape of an irregular staircase (Fig.~\ref{f-sc}).
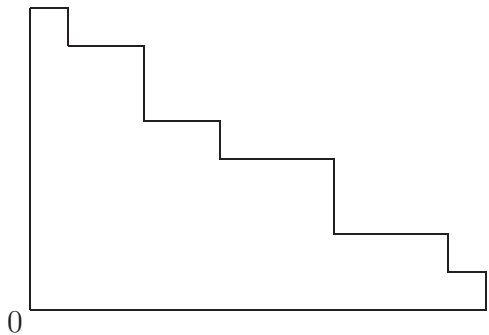
\begin{figure}[hbt]
\begin{center}
\setlength{\unitlength}{1mm}
\begin{picture}(65,60)
\put(5,5){\line(1,0){60}}
\put(5,5){\line(0,1){40}}
\put(5,45){\line(1,0){5}}
\put(10,45){\line(0,-1){5}}
\put(10,40){\line(1,0){10}}
\put(20,40){\line(0,-1){10}}
\put(20,30){\line(1,0){10}}
\put(30,30){\line(0,-1){5}}
\put(30,25){\line(1,0){15}}
\put(45,25){\line(0,-1){10}}
\put(45,15){\line(1,0){15}}
\put(60,15){\line(0,-1){5}}
\put(60,10){\line(1,0){5}}
\put(65,10){\line(0,-1){5}}
\put(2,2){0}
\end{picture}
\end{center}
\caption{Open subset as staircase.}\label{f-sc}
\end{figure}

Given an open subset $U'$ of $X$, a minimal open subset $U$ containing $U'$ has the form $U = U'\cup U_{i,j} = U'\cup \{(i,j)\}$, where $(i,j)\in X$ is the only point not contained in $U'$.
In this way, the section $\sigma'$ on $U'$ can be extended to a section $\sigma$ on $U$ by assigning $(i,j)$ a value $\sigma(i,j)$.
\begin{figure}[hbt]
\begin{center}
\setlength{\unitlength}{1mm}
\begin{picture}(45,45)
\put(10,5){\line(1,0){40}}
\put(10,5){\line(0,1){40}}
\put(10,30){\line(1,0){30}}
\put(10,40){\line(1,0){20}}
\put(30,5){\line(0,1){35}}
\put(40,5){\line(0,1){25}}
\put(7,2){\tiny 0}
\put(2,30){\tiny $j-1$}
\put(7,40){\tiny $j$}
\put(30,2){\tiny $i-1$}
\put(40,2){\tiny $i$}
\end{picture}
\end{center}
\caption{Staircase $U'=U_{i-1,j}\cup U_{i,j-1}$.}\label{f-st}
\end{figure}
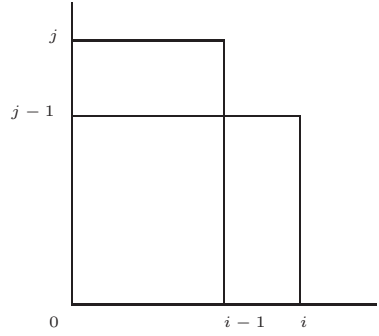
%Then each open subset $U_{i,j}$ with $i\in[m]$ and $j\in[n]$ has the maximal open subsets $U_{i-1,j}$ and $U_{i,j-1}$.
%By taking the open subset $U'_{i,j} = U_{i-1,j}\cup U_{i,j-1}$, 
%a section $\sigma_{i,j}$ over $U_{i,j}$ can be defined by extending a section $\sigma'_{i,j}$ over $U'_{i,j}$.
%In particular, we have $\sigma_{i,j}(u) = \sigma'_{i,j}(u)$ for each $u\in U'_{i,j}$.

For instance, consider the dynamic programming algorithm of Needleman-Wunsch~\cite{nw} for the alignment of two sequences of length $m$ and $n$.
This algorithm defines a function $\sigma:X\rightarrow\RR$ by setting $\sigma(0,0) = 0$
and for all $(i,j)\in X$,
\begin{eqnarray*}
\sigma(i,j) &=& \min \left\{\begin{array}{l} \sigma(i-1,j)+c(i-1,j), \\ \sigma(i,j-1)+c(i,j-1), \\ \sigma(i-1,j-1)+c(i-1,j-1)\end{array}\right\},
\end{eqnarray*}
where $c:X\rightarrow \RR$ is a function depending on the two sequences to be aligned.
This is a sheaf algorithm defining a global section $\sigma$ over $X$.
Here a section $\sigma'$ over 
$U'=U_{i-1,j}\cup U_{i,j-1}$ 
is extended to a section $\sigma$ over $U_{i,j}$ by defining the value $\sigma(i,j)$ 
in dependence of $\sigma'$ and $c$ (Fig.~\ref{f-st}).
\end{example}

%\begin{example}
%%Let $n\geq 0$ be an integer and let $X=[n]\times[n] = \{(i,j)\mid 0\leq i,j\leq n\}$.
%Consider the pair set $X=\NN_0\times \NN_0$.
%For each pair $(i,j)\in\NN_0\times\NN_0$, define the strips 
%$$U_{i,j} = \{(k,j)\in X \mid 0\leq k\leq i\}.$$
%Take the sets $U_{i,j}$ as basis of the topological space $X$.
%This topology is finer than that in the previous example.
%\end{example}

\begin{example}
Let $\AAA^n$ denote the affine $n$-space over a field $\KK$.
Consider the zero set $X\subseteq \AAA^4$ given by the equation $x_1x_4=x_2x_3$~\cite{ga}.
The underlying topology is the Zariski topology and the distinguished open sets form a basis of the Zariski topology
on $X$.

Let $U\subset X$ denote the open subset of all points in $X$ where $x_2\ne 0$ or $x_4\ne 0$.
The quotient $\frac{x_1}{x_2}$ is defined on the set $U_2$ of all points of $X$ where $x_2\ne 0$, 
and the quotient $\frac{x_3}{x_4}$ is defined on the set $U_4$ of all points of $X$ where $x_4\ne 0$.
The sets $U_2$ and $U_4$ are distinguished open sets, but $U=U_2\cup U_4$ is not a distinguished open set.
Both quotients are the same when they are both defined, since then 
$$ \frac{x_1}{x_2} = \frac{x_3}{x_4}.$$

Consider a monadic function $f:\KK\rightarrow\KK$.
Define the sections $\sigma_2:U_2\rightarrow \KK$ and $\sigma_4:U_4\rightarrow \KK$ 
as $\sigma_2(x_1,x_2,x_3,x_4) = f(\frac{x_1}{x_2})$ and $\sigma_4(x_1,x_2,x_3,x_4) = f(\frac{x_3}{x_4})$,
respectively.
It is clear that both functions are equal on the intersection $U_2\cap U_4$.
Since $U=U_2\cup U_4$, we have a section $\sigma':U\rightarrow \KK$ where 
$\sigma'_{|U_2}=\sigma_2$ and $\sigma'_{|U_4}=\sigma_4$.
A global section $\sigma:X\rightarrow\KK$ can be defined say by defining $\sigma(x_1,x_2,x_3,x_4)=1$ for all
points in $X$ which are not in  the union $U_2\cup U_4$ and $\sigma_{|U} = \sigma'$.
%It is irreducible and there is the equality $f=x/y=z/w$ in the fraction field $K(X)$ given by $K[w,x,y,z]/I(X)$,
%where $I(X)$ is the ideal corresponding to $X$.
%The rational function $f$ is defined on the open set $X_y\cup X_w$ which is strictly larger than both
%%the distinguished open sets $X_y$ and $X_w$ 
%but the maximal open set where $f$ is defined is not a distinguished open set.
\end{example}

\end{document}